\newcommand{\Hom}{\operatorname{Hom}\nolimits}
\renewcommand{\Im}{\operatorname{Im}\nolimits}
\newcommand{\Ker}{\operatorname{Ker}\nolimits}
\newcommand{\Ann}{\operatorname{Ann}\nolimits}
\newcommand{\Ext}{\operatorname{Ext}\nolimits}
\newcommand{\m}{\operatorname{\mathfrak{m}}\nolimits}
\newcommand{\az}{\operatorname{\mathfrak{a}}\nolimits}
\newcommand{\V}{\operatorname{V}\nolimits}
\newtheorem{theorem}{Theorem}[section]
\newtheorem{corollary}[theorem]{Corollary}
\newtheorem{lemma}[theorem]{Lemma}
\theoremstyle{definition}
\theoremstyle{definition}
\theoremstyle{remark}
\theoremstyle{definition}
\newtheorem*{remarks}{Remarks}
\theoremstyle{definition}
\begin{document}
\title{On Support Varieties for Modules over Complete Intersections}
\author{Petter Andreas Bergh}
\address{Petter Andreas Bergh \newline Institutt for matematiske fag \\
NTNU \\ N-7491 Trondheim \\ Norway}

\email{bergh@math.ntnu.no}

\subjclass[2000]{Primary 13C14, 13C40, 13D07, 14M10; Secondary
20J06}

\keywords{Complete intersections, support varieties.}

 \maketitle

\begin{abstract}
Let $(A, \m, k)$ be a complete intersection of codimension $c$, and
$\tilde{k}$ the algebraic closure of $k$. We show that every
homogeneous algebraic subset of $\tilde{k}^c$ is the cohomological
support variety of an $A$-module, and that the projective
variety of a complete indecomposable maximal Cohen-Macaulay
$A$-module is connected.
\end{abstract}

\section{Introduction}

\sloppy Support varieties for modules over complete intersections
were defined by L.\ Avramov in \cite{Avramov1}, and L.\ Avramov and
R.-O.\ Buchweitz showed in \cite{Avramov2} that these varieties to a large
extent behave precisely like the cohomological varieties of modules
over group algebras of finite groups. Further illustrating this are
the two main results in this paper, the first of which says that
every homogeneous variety is realized as the variety of some
module. The second is a version of J.\ Carlson's
result \cite[Theorem 1']{Carlson} on varieties for modules over
group algebras of finite groups. Namely, we prove that if the
variety of a module decomposes as the union of two closed
subvarieties having trivial intersection, then the (completion of
the) minimal maximal Cohen-Macaulay approximation of the module
decomposes accordingly.

Throughout this paper we let $(A, \m, k)$ be a commutative
Noetherian local complete intersection, i.e.\ the completion
$\widehat{A}$ of $A$ with respect to the $\m$-adic topology is the
residue ring of a regular local ring modulo an ideal generated by
a regular sequence. We denote by $c$ the codimension of $A$, that
is, the integer $\mu ( \m) - \dim A$, where $\mu ( \m )$ is the
minimal number of generators for $\m$. All modules are assumed to
be finitely generated.

We now recall the definition of support varieties for modules over
complete intersections; details can be found in \cite[Section
1]{Avramov1} and \cite[Section 2]{Avramov2}. Let $\widehat{A} [
\chi_1, \dots, \chi_c ]$ be the
polynomial ring in the $c$ commuting Eisenbud operators of
cohomological degree $2$. For every $\widehat{A}$-module $X$ there is
a homomorphism $\widehat{A} [ \chi_1, \dots,
\chi_c ] \xrightarrow{\phi_X} \Ext_{\widehat{A}}^* ( X,X )$ of
graded rings under which $\Ext_{\widehat{A}}^* ( X,Y)$ is
a finitely generated graded $\widehat{A} [ \chi_1, \dots, \chi_c
]$-module for any $\widehat{A}$-module $Y$. Using the canonical
isomorphism $k [ \chi_1, \dots, \chi_c ] \simeq \widehat{A} [ \chi_1,
\dots, \chi_c ] \otimes_{\widehat{A}} k$ we obtain a homomorphism $k [
\chi_1, \dots, \chi_c ] \xrightarrow{\phi_X \otimes 1}
\Ext_{\widehat{A}}^* (
X,X ) \otimes_{\widehat{A}} k$ of graded rings under which
$\Ext_{\widehat{A}}^* ( X,Y) \otimes_{\widehat{A}} k$ is finitely
generated over $k [ \chi_1, \dots, \chi_c ]$. We denote the polynomial
ring $k [ \chi_1, \dots, \chi_c ]$ by $H$ and the graded $H$-module
$\Ext_{\widehat{A}}^* ( X,Y) \otimes_{\widehat{A}} k$ by
$E(X,Y)$. Furthermore, we denote the sequence $\chi_1, \dots, \chi_c$
of Eisenbud operators by $\chi$, so that $\widehat{A}[ \chi ]$ and $k[
\chi ]$ are short-hand notations for $\widehat{A} [ \chi_1, \dots,
\chi_c ]$ and $k [ \chi_1, \dots, \chi_c ]$, respectively.

Let $M$ be an
$A$-module and $\widehat{M} = \widehat{A} \otimes_A M$ its $\m$-adic
completion. The \emph{support variety} $\V(M)$ of $M$ is the
algebraic set
$$\V(M) = \{ \alpha = ( \alpha_1, \dots, \alpha_c ) \in
\tilde{k}^c \mid f( \alpha ) =0 \text{ for all } f \in \Ann_H
E(\widehat{M},\widehat{M}) \},$$
where $\tilde{k}$ is the algebraic closure of $k$. This is equal to
the algebraic set
defined by the annihilator in $H$ of $E(\widehat{M},k)$.

For an ideal $\az$ of $H$ we denote by $\V_H( \az )$ the algebraic
set in $\tilde{k}^c$ defined by $\az$, i.e.\
$$\V_H( \az ) = \{ \alpha = ( \alpha_1, \dots, \alpha_c ) \in
\tilde{k}^c \mid f( \alpha ) =0 \text{ for all } f \in \az \}.$$
Note that the variety $\V (M)$ of $M$ is the set $\V_H \left (
\Ann_H E(\widehat{M},\widehat{M}) \right )$, and if $f$ is an
element of $H$ then $\V_H (f)$ is the set of all elements in
$\tilde{k}^c$ on which $f$ vanishes.

\section{Realizing support varieties}

Before proving the main results we need some notation. Let $R$ be
a commutative Noetherian local ring and $X$ an $R$-module with
minimal free resolution
$$\cdots \to P_2 \to P_1 \to P_0 \to X \to 0,$$
and denote by $\Omega_R^n(X)$ the $n$'th syzygy of $X$. For an
$R$-module $Y$, a homogeneous element $\eta \in \Ext_R^*(X,Y)$ can
be represented by a map $f_{\eta} \colon \Omega_R^{|\eta|}(X) \to
Y$, giving the pushout diagram
$$\xymatrix{
0 \ar[r] & \Omega_R^{|\eta|}(X) \ar[r] \ar[d]^{f_{\eta}} &
P_{|\eta|-1} \ar[r] \ar[d] & \Omega_R^{|\eta|-1}(X) \ar[r]
\ar@{=}[d] & 0 \\
0 \ar[r] & Y \ar[r] & K_{\eta} \ar[r] & \Omega_R^{|\eta|-1}(X)
\ar[r] & 0 }$$ with exact rows. Note that the module $K_{\eta}$ is
independent, up to isomorphism, of the map $f_{\eta}$ chosen as a
representative for $\eta$. The construction of this module first
appeared in the paper \cite{Avramov3} by L.\ Avramov, V.\ Gasharov and
I.\ Peeva, where it is used in the proof of Theorem 7.8.

If $\theta \in \Ext_R^*(X,X)$ is another
homogeneous element, then the Yoneda product $\eta \theta \in
\Ext_R^*(X,Y)$ is a homogeneous element of degree $|\eta| +
|\theta|$. The following lemma links $K_{\eta}$ and $K_{\theta}$ to
$K_{\eta \theta}$ via a short exact sequence, and will be a key
ingredient in the proof of the decomposition theorem in the next
section.

\begin{lemma}[\protect{\cite[Lemma 2.3]{Bergh}}]\label{link}
If $\theta \in \Ext_R^* (X,X)$ and $\eta \in \Ext_R^* (X,Y)$ are two
homogeneous elements, then there exists an exact sequence
$$0 \to \Omega_A^{|\eta|}(K_{\theta}) \to K_{\eta
\theta}\oplus F \to K_{\eta} \to 0$$ of $R$-modules, where $F$ is
free.
\end{lemma}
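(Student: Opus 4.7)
The plan is to derive the short exact sequence as the module-theoretic realization of the octahedral axiom applied to a factorization of the Yoneda product $\eta\theta$.

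In the stable category of $R$-modules, each module $K_\xi$ is, up to a free direct summand, the mapping cone of the stable-category morphism $\Omega_R^{|\xi|}(X) \to Y$ representing $\xi$. The Yoneda product admits the factorization $\Omega_R^{|\eta|+|\theta|}(X) \xrightarrow{\Omega_R^{|\eta|}(f_\theta)} \Omega_R^{|\eta|}(X) \xrightarrow{f_\eta} Y$, and the octahedral axiom applied to this composition produces a distinguished triangle whose three vertices are the mapping cones of the component maps. Since syzygies commute with mapping cones in the stable category, the cone of $\Omega_R^{|\eta|}(f_\theta)$ is $\Omega_R^{|\eta|}(K_\theta)$; the resulting triangle $\Omega_R^{|\eta|}(K_\theta) \to K_{\eta\theta} \to K_\eta \to$ then lifts to a short exact sequence in $\mod R$, with $F$ arising as the free correction term that accounts for the passage from the stable category back to honest modules.

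To produce this sequence explicitly, I would first apply the horseshoe lemma $|\eta|$ times to the defining sequence $0 \to X \to K_\theta \to \Omega_R^{|\theta|-1}(X) \to 0$ of $K_\theta$; comparing the resulting (non-minimal) horseshoe resolution of $K_\theta$ with its minimal resolution via Schanuel, and absorbing the difference into a free module $F_1$, yields
$$0 \to \Omega_R^{|\eta|}(X) \to \Omega_R^{|\eta|}(K_\theta) \oplus F_1 \to \Omega_R^{|\eta|+|\theta|-1}(X) \to 0.$$
Pushing this sequence out along $f_\eta$ produces a short exact sequence whose extension class is $\eta\theta$ by the standard dimension-shift interpretation of Yoneda product, so its middle term is $K_{\eta\theta}$. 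Next, I would lift $f_\theta$ to a chain map of minimal free resolutions of $X$; this induces a comparison morphism $g \colon K_{\eta\theta} \to K_\eta$ between pushout diagrams. Combining $g$ with the canonical map $j \colon P_{|\eta|-1} \to K_\eta$ coming from the pushout defining $K_\eta$ gives a surjection $(g,j) \colon K_{\eta\theta} \oplus P_{|\eta|-1} \twoheadrightarrow K_\eta$, whose kernel is identified via the snake lemma with the pushout constructed in the first step.

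The hard part will be the bookkeeping of free summands. The construction just described yields a short exact sequence of the form $0 \to \Omega_R^{|\eta|}(K_\theta) \oplus F_1 \to K_{\eta\theta} \oplus P_{|\eta|-1} \to K_\eta \to 0$, whereas the lemma places the free summand only on the middle term attached to $K_{\eta\theta}$. Reconciling the two requires showing that $F_1$ splits off as a common direct summand of both the kernel and the middle; quotienting by this common summand then produces the statement's sequence with $F$ a free direct summand of $P_{|\eta|-1}$ of the appropriate rank.
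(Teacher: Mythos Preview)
The paper does not contain a proof of this lemma: it is quoted verbatim from \cite[Lemma~2.3]{Bergh} and used as a black box, so there is nothing in the present paper to compare your argument against.

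On the substance of your sketch: the octahedral heuristic is the right picture, but be careful that $R$ here is an arbitrary commutative Noetherian local ring, so the stable module category is not triangulated and the octahedral axiom is only motivational---your explicit module-level construction is what actually carries the argument. That construction is broadly sound (horseshoe on the defining sequence of $K_\theta$, then push out along $f_\eta$, then compare pushout diagrams), and you have correctly identified the genuine content as the free-summand bookkeeping. One point that deserves more care than you give it: after pushing out the horseshoe sequence along $f_\eta$, the resulting extension of $\Omega_R^{|\eta|+|\theta|-1}(X)$ by $Y$ represents $\eta\theta$, but this uses that the horseshoe sequence itself represents $\theta$ under the dimension-shift isomorphism $\Ext_R^1(\Omega_R^{|\theta|-1}X,X)\cong\Ext_R^1(\Omega_R^{|\eta|+|\theta|-1}X,\Omega_R^{|\eta|}X)$, which is true but should be stated and checked rather than asserted. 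Likewise, the final step---splitting off the common free summand $F_1$ from both the kernel and the middle term---is not automatic from Schanuel alone; you need to verify that the inclusion $\Omega_R^{|\eta|}(K_\theta)\oplus F_1 \hookrightarrow K_{\eta\theta}\oplus P_{|\eta|-1}$ restricts to a split inclusion of $F_1$ into the free part, which follows once you trace how $F_1$ enters via the horseshoe construction.
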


Now suppose $R$ is Gorenstein and $X$ is a maximal Cohen-Macaulay
(or ``MCM" from now on) module. Then there exists a \emph{complete
resolution}
$$\mathbb{P} \colon \cdots \to P_2 \to P_1 \to P_0 \xrightarrow{d}
P_{-1} \to P_{-2} \to \cdots$$ of $X$, i.e.\ a doubly infinite exact
sequence of free modules in which $\Im d$ is isomorphic to $X$. For
an integer $n \in \mathbb{Z}$ the \emph{stable cohomology module}
$\widehat{\Ext}_R^n(X,Y)$ is defined as the $n$'th homology of the
complex $\Hom_R(\mathbb{P},Y)$. If $X$ and $Y$ are
$\widehat{A}$-modules and $X$ is MCM, then
$\widehat{\Ext}_{\widehat{A}}^*(X,Y) = \bigoplus_{i= -
\infty}^{\infty} \widehat{\Ext}_{\widehat{A}}^i(X,Y)$ is a module
over the ring $\widehat{A} [ \chi ]$ of cohomology
operators, and the exact same proof as the one used to prove
\cite[Lemma 4.2]{Erdmann} shows that for any prime ideal
$\mathfrak{q} \neq (\chi)$ of $\widehat{A} [
\chi ]$ the $\widehat{A} [ \chi
]_{\mathfrak{q}}$-modules $\widehat{\Ext}_{\widehat{A}}^*(X,Y)_{\mathfrak{q}}$ and
$\Ext_{\widehat{A}}^*(X,Y)_{\mathfrak{q}}$ are isomorphic.

We are now ready to prove the first result, which shows that when
``cutting down" the variety of an MCM $\widehat{A}$-module by a
homogeneous element, the resulting homogeneous algebraic set is also
the variety of an $\widehat{A}$-module.

\begin{theorem}\label{cuttingvariety}
Let $\eta \in H^+ =(\chi)$ be a homogeneous
element, and let $\overline{\eta} \in \widehat{A} [
\chi ]$ be a homogeneous element such that
$\overline{\eta} \otimes 1$ corresponds to $\eta$ under the
isomorphism $H \simeq \widehat{A} [ \chi ]
\otimes_{\widehat{A}} k$. Furthermore, let $X$ be an
$\widehat{A}$-module, and let $\theta \in
\Ext_{\widehat{A}}^* (X,X)$ be any homogeneous element such that $\theta
\otimes 1 = \phi_X ( \overline{\eta} ) \otimes 1$ in $E(X,X) =
\Ext_{\widehat{A}}^* (X,X) \otimes_{\widehat{A}} k$.
Then there is an inclusion
$$\V(K_{\theta}) \subseteq \V(X) \cap \V_H (\eta),$$
and equality holds whenever $X$ is MCM.
\end{theorem}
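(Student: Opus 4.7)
The plan is to exploit the defining short exact sequence of $K_\theta$
$$0 \to X \to K_\theta \to \Omega_{\widehat{A}}^{n-1}(X) \to 0,$$
where $n = |\theta| = |\eta|$; by construction this extension has Yoneda class $\theta$. Applying $\Ext_{\widehat{A}}^*(-, k)$ and invoking the standard syzygy identification $\Ext_{\widehat{A}}^i(\Omega^{n-1}X, k) \cong \Ext_{\widehat{A}}^{i+n-1}(X, k)$ (valid for all $i \geq 1$), I obtain a long exact sequence whose connecting homomorphism is Yoneda multiplication by $\theta$. Because $\theta - \phi_X(\overline{\eta})$ lies in $\m \cdot \Ext_{\widehat{A}}^*(X, X)$ and $\Ext_{\widehat{A}}^*(X, k)$ is annihilated by $\m$, this action of $\theta$ equals the $H$-module action of $\eta$ on $E(X, k) = \Ext_{\widehat{A}}^*(X, k)$.

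From the long exact sequence I extract, in sufficiently high cohomological degrees, a short exact sequence of graded $H$-modules of the shape
$$0 \to E(X, k)/\eta E(X, k) \to E(K_\theta, k) \to (0 :_{E(X, k)} \eta) \to 0$$
(with a degree shift on the leftmost term irrelevant to the variety computation). Both outer terms are annihilated by $\Ann_H E(X, k) + (\eta)$, since each is a submodule or quotient of $E(X, k)$ on which $\eta$ additionally acts trivially. The standard annihilator bound for extensions then yields $\Ann_H E(K_\theta, k) \supseteq \bigl(\Ann_H E(X, k) + (\eta)\bigr)^2$, whence
$$\V(K_\theta) \subseteq \V_H\bigl(\Ann_H E(X, k) + (\eta)\bigr) = \V(X) \cap \V_H(\eta),$$
which is the asserted inclusion and requires no hypothesis on $X$.

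For the reverse containment when $X$ is MCM, I pass to stable cohomology. Since $\widehat{A}$ is Gorenstein, $\Omega^{n-1}X$ is MCM, and $K_\theta$ inherits this property as an extension of two MCM modules, so the entire argument above runs inside the stable Ext modules $\widehat{\Ext}_{\widehat{A}}^*(-, k)$; in the stable setting the long exact sequence is exact in every degree and the short exact sequence above holds globally. Given $\alpha \in \V(X) \cap \V_H(\eta)$ with corresponding prime $\mathfrak{p}_\alpha$, the case $\alpha = 0$ is trivial because $K_\theta \neq 0$, so I may assume $\mathfrak{p}_\alpha \neq (\chi)$; the isomorphism $\widehat{\Ext}_{\widehat{A}}^*(-, k)_{\mathfrak{p}_\alpha} \cong \Ext_{\widehat{A}}^*(-, k)_{\mathfrak{p}_\alpha}$ recorded in the introduction lets me argue entirely in terms of ordinary Ext after localization. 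Since $E(X, k)_{\mathfrak{p}_\alpha} \neq 0$ and $\eta$ lies in the Jacobson radical of the local ring $H_{\mathfrak{p}_\alpha}$, Nakayama's lemma forces $\bigl(E(X, k)/\eta E(X, k)\bigr)_{\mathfrak{p}_\alpha} \neq 0$; the short exact sequence then propagates this nonvanishing into $E(K_\theta, k)_{\mathfrak{p}_\alpha}$, giving $\alpha \in \V(K_\theta)$. The main obstacle is the careful identification of the connecting homomorphism as multiplication by $\eta$ on $E(X, k)$ and, for the equality, the use of stable cohomology to remove the asymptotic restriction and make the Nakayama step valid after localizing at any non-irrelevant prime of $\widehat{A}[\chi]$.
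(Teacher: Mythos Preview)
Your proposal is correct and follows essentially the same route as the paper: both use the long exact sequence induced by $0 \to X \to K_\theta \to \Omega^{n-1}X \to 0$, identify the connecting map with right multiplication by $\theta$, extract the short exact sequence with outer terms $E(X,k)/\eta E(X,k)$ and $(0:_{E(X,k)}\eta)$ to bound the annihilator of $E(K_\theta,k)$, and for the reverse inclusion pass to stable cohomology, localize at a non-irrelevant prime, and invoke Nakayama. Your observation that $\Ext_{\widehat{A}}^*(X,k)$ is already a $k$-vector space, so that the action of $\theta$ literally coincides with that of $\eta$, is a mild streamlining of the paper's argument (which instead shows that $\overline{\eta}$ annihilates the outer terms and later unwinds $\theta = \phi_X(\overline{\eta}) - \sum m_i\theta_i$ inside $\overline{\mathfrak{p}}$); likewise your direct Nakayama step replaces the paper's equivalent proof by contradiction.
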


\begin{proof}
Consider the exact sequence
$$0 \to X \to K_{\theta} \to \Omega_{\widehat{A}}^{|\eta|-1}(X)
\to 0$$ representing $\theta$. Since varieties are invariant under
syzygies we have $\V(K_{\theta}) \subseteq \V(X)$, and so the first
half of the theorem will follow if we can establish the inclusion
$\V(K_{\theta}) \subseteq \V_H(\eta)$.

The exact sequence induces a long exact sequence
$$\xymatrix@R=0.2pc@C=1pc{
0 \ar[r] & \Hom_{\widehat{A}}( \Omega_{\widehat{A}}^{|\eta|-1}(X),k)
\ar[r] & \Hom_{\widehat{A}}(K_{\theta},k) \ar[r] &
\Hom_{\widehat{A}}(X,k) \\
\ar[r]^<<<<<<{\circ \theta} & \Ext_{\widehat{A}}^{|\eta|}(M,k) \ar[r] &
\Ext_{\widehat{A}}^1(K_{\theta},k) \ar[r] &
\Ext_{\widehat{A}}^1(X,k) \\
\ar[r]^<<<<<<{\circ (- \theta )} & \Ext_{\widehat{A}}^{|\eta|+1}(M,k)
\ar[r] & \Ext_{\widehat{A}}^2(K_{\theta},k) \ar[r] &
\Ext_{\widehat{A}}^2(X,k) \\
& \vdots & \vdots & \vdots \\
\ar[r]^<<<<<<{\circ (-1)^i \theta } &
\Ext_{\widehat{A}}^{|\eta|+i}(M,k)
\ar[r] & \Ext_{\widehat{A}}^{i+1}(K_{\theta},k) \ar[r] &
\Ext_{\widehat{A}}^{i+1}(X,k) \\
& \vdots & \vdots & \vdots }$$
in cohomology, from which we obtain the short exact sequence
$$0 \to \frac{\Ext_{\widehat{A}}^{*+ |\eta|} (X,k)
}{\Ext_{\widehat{A}}^* (X,k) \circ \theta} \xrightarrow{f}
\Ext_{\widehat{A}}^{*+1}(K_{\theta},k) \xrightarrow{g} \Ker ( \circ \theta
)|_{\Ext_{\widehat{A}}^{*+1}(X,k)} \to 0.$$
Now for any $\widehat{A}$-modules $W$ and $Z$ the left and right
scalar actions from $\widehat{A}[ \chi ]$ on
$\Ext_{\widehat{A}}^*(W,Z)$, through the ring homomorphisms $\phi_Z$
and $\phi_W$, respectively, are actually equal (see
\cite[1.1.2]{Avramov2}). Consequently $\Ext_{\widehat{A}}^* (X,k)
\circ \theta$ is an $\widehat{A}[ \chi ]$-submodule of
$\Ext_{\widehat{A}}^{*+|\eta|}(X,k)$, and the above short
exact sequence is a sequence of  $\widehat{A}[ \chi ]$-modules and
maps. Moreover, the end terms are both annihilated by the element
$\overline{\eta}$. To see this, note that since $\theta
\otimes 1 = \phi_X ( \overline{\eta} ) \otimes 1$ in
$\Ext_{\widehat{A}}^* (X,X) \otimes_{\widehat{A}} k$, the element
$\phi_X ( \overline{\eta} ) - \theta \in \Ext_{\widehat{A}}^* (X,X)$
can be written as a finite sum
$$\phi_X ( \overline{\eta} ) - \theta = \sum m_i \theta_i,$$
where $m_i \in \widehat{\m}$ and $\theta_i \in \Ext_{\widehat{A}}^*
(X,X)$. If $G^* = \oplus_{i=0}^{\infty} G^i$ is any graded right
$\Ext_{\widehat{A}}^* (X,X)$-module annihilated by $\theta$, and with
the property that each graded part $G^i$ is finitely generated over
$\widehat{A}$, then
$$\left [ G^i \cdot \phi_X ( \overline{\eta} ) \right ]
\otimes_{\widehat{A}} k = \left [ G^i
\cdot ( \theta + \sum m_i \theta_i ) \right ] \otimes_{\widehat{A}} k =0.$$
This implies that $G^i \cdot \phi_X ( \overline{\eta} )$ vanishes
itself, hence $\overline{\eta}$ annihilates $G^*$. In particular, the
element $\overline{\eta}$ annihilates the end terms in the above short
exact sequence.

Now for any $i \geq 0$, let $w$ be an element of
$\Ext_{\widehat{A}}^i(K_{\theta},k)$, and consider the element
$\overline{\eta} \cdot w \in
\Ext_{\widehat{A}}^{i+|\eta|}(K_{\theta},k)$. Since $g(
\overline{\eta} \cdot w) = \overline{\eta} \cdot g(w) =0$, there must
exist an element $z \in \frac{\Ext_{\widehat{A}}^{*+ |\eta|} (X,k)
}{\Ext_{\widehat{A}}^* (X,k) \circ \theta}$ with the property that
$\overline{\eta} \cdot w = f(z)$, giving $\overline{\eta}^2 \cdot w =
f ( \overline{\eta} \cdot z)=0$. Therefore the element
$\overline{\eta}^2$ annihilates $\Ext_{\widehat{A}}^i(K_{\theta},k)$, and
so the element $\eta^2 \in H$ is contained in $\Ann_H
E(K_{\theta},k)$. This gives the inclusion $\V (K_{\theta})
\subseteq \V_H ( \eta^2 ) = \V_H ( \eta )$, thereby establishing the first half
of the theorem.

\sloppy Next suppose that $X$ is MCM, and let $\mathfrak{p} \neq H^+$
be a prime ideal of $H$ containing $\eta$ and $\Ann_H E(X,k)$. Choose
a prime ideal $\overline{\mathfrak{p}} \neq (\chi)$ of $\widehat{A} [
\chi ]$ corresponding to $\mathfrak{p}$ and containing
$\overline{\eta}$ and the annihilator of $\Ext_{\widehat{A}}^*
(X,k)$, and suppose $\overline{\mathfrak{p}}$ does
\emph{not} contain the annihilator of $\Ext_{\widehat{A}}^*
(K_{\theta},k)$.
The exact sequence from the beginning of the proof
induces a long exact sequence
$$\cdots \to \widehat{\Ext}_{\widehat{A}}^i(K_{\theta},k) \to
\widehat{\Ext}_{\widehat{A}}^i(X,k) \xrightarrow{\circ (-1)^i \theta}
\widehat{\Ext}_{\widehat{A}}^{i+|\eta|}(X,k) \to
\widehat{\Ext}_{\widehat{A}}^{i+1}(K_{\theta},k) \to \cdots$$ in
stable cohomology, which in turn gives the exact sequence
$$0 \to \frac{\widehat{\Ext}_{\widehat{A}}^{*+|\eta|-1}(X,k)}{
\widehat{\Ext}_{\widehat{A}}^{*-1}(X,k) \circ \theta} \to
\widehat{\Ext}_{\widehat{A}}^*(K_{\theta},k),$$
in which the index $*$ ranges over all the integers. Now let
$$\cdots \to P_2 \xrightarrow{d_2} P_1 \xrightarrow{d_1} P_0
\xrightarrow{d_0} P_{-1} \xrightarrow{d_{-1}} P_{-2} \to \cdots$$
be a complete resolution of $X$, and consider the group
$\widehat{\Ext}_{\widehat{A}}^{-n}(X,k)$ for any nonnegative integer $n$.
Since $\widehat{\Ext}_{\widehat{A}}^{-n}(X,k) =
\widehat{\Ext}_{\widehat{A}}^1( \Ker d_{-(n+2)},k)=
\Ext_{\widehat{A}}^1( \Ker d_{-(n+2)},k)$, the ring $\widehat{A} [
\chi ]$ acts on $\widehat{\Ext}_{\widehat{A}}^{-n}(X,k)$ from both
sides, and these actions coincide. Therefore
$\widehat{\Ext}_{\widehat{A}}^{*}(X,k) \circ \theta$ is an
$\widehat{A} [ \chi ]$-submodule of
$\widehat{\Ext}_{\widehat{A}}^{*+|\eta|}(X,k)$, and the above short
exact sequence is a sequence of $\widehat{A} [ \chi ]$-modules.

Now recall from the discussion prior to this theorem that
$\widehat{\Ext}_{\widehat{A}}^*(W,Z)_{\overline{\mathfrak{p}}}
\simeq \Ext_{\widehat{A}}^*(W,Z)_{\overline{\mathfrak{p}}}$ for any
$\widehat{A}$-modules $W$ and $Z$ with $W$ MCM. As
$\overline{\mathfrak{p}}$ does not contain the annihilator of
$\Ext_{\widehat{A}}^* (K_{\theta},k)$, we see by localizing the
above short exact sequence at $\overline{\mathfrak{p}}$ that
$\widehat{\Ext}_{\widehat{A}}^*(X,k)_{\overline{\mathfrak{p}}} =
\left [ \widehat{\Ext}_{\widehat{A}}^*(X,k) \circ \theta \right
]_{\overline{\mathfrak{p}}}$. Since $\theta = \phi_X( \overline{\eta}
) - \sum m_i \theta_i$ in the ring $\Ext_{\widehat{A}}^*(X,X)$, and
the ideal $\overline{\mathfrak{p}}$ contains both $\overline{\eta}$
and the $m_i$ (it contains the element $\overline{\eta}$ by
assumption, and contains the ideal $\widehat{\m}$ because it
corresponds to the ideal $\mathfrak{p} \subseteq H$ under the
isomorphism $\widehat{A} [ \chi ] \otimes_{\widehat{A}} k \simeq H$),
the $\widehat{A}( \chi )_{\overline{\mathfrak{p}}}$-module
$\left [ \widehat{\Ext}_{\widehat{A}}^*(X,k) \circ \theta \right
]_{\overline{\mathfrak{p}}}$ must be contained in $\overline{\mathfrak{p}}
\widehat{A}( \chi )_{\overline{\mathfrak{p}}} \cdot
\widehat{\Ext}_{\widehat{A}}^*(X,k)_{\overline{\mathfrak{p}}}$.
Consequently the inclusions
$$\widehat{\Ext}_{\widehat{A}}^*(X,k)_{\overline{\mathfrak{p}}}
\subseteq \overline{\mathfrak{p}}
\widehat{A}( \chi )_{\overline{\mathfrak{p}}} \cdot
\widehat{\Ext}_{\widehat{A}}^*(X,k)_{\overline{\mathfrak{p}}}
\subseteq
\widehat{\Ext}_{\widehat{A}}^*(X,k)_{\overline{\mathfrak{p}}}$$
hold, and so $\widehat{\Ext}_{\widehat{A}}^*(X,k)_{\overline{\mathfrak{p}}}
= \overline{\mathfrak{p}}
\widehat{A}( \chi )_{\overline{\mathfrak{p}}} \cdot
\widehat{\Ext}_{\widehat{A}}^*(X,k)_{\overline{\mathfrak{p}}}$.
But
$\widehat{\Ext}_{\widehat{A}}^*(X,k)_{\overline{\mathfrak{p}}}$,
being isomorphic to
$\Ext_{\widehat{A}}^*(X,k)_{\overline{\mathfrak{p}}}$, is finitely
generated over $\widehat{A} [ \chi
]_{\overline{\mathfrak{p}}}$, hence Nakayama's Lemma implies
$\Ext_{\widehat{A}}^*(X,k)_{\overline{\mathfrak{p}}}=0$. This
contradicts the assumption that $\overline{\mathfrak{p}}$ contains
the annihilator of $\Ext_{\widehat{A}}^* (X,k)$, and therefore
$\overline{\mathfrak{p}}$ must contain the annihilator of
$\Ext_{\widehat{A}}^* (K_{\theta},k)$. But then $\Ann_H
E(K_{\theta},k) \subseteq \mathfrak{p}$, giving the inclusion
$$\sqrt{\Ann_H E(K_{\theta},k)} \subseteq \sqrt{\left ( \eta,
\Ann_H E(X,k) \right ) }$$ of ideals in $H$, and consequently we
get $\V(X) \cap \V_H (\eta) \subseteq \V(K_{\theta})$.
\end{proof}

Suppose now that we start with an $A$-module $M$, and consider its
completion $\widehat{M}$. Let $\eta \in H^+$ be a homogeneous element,
let $\overline{\eta} \in \widehat{A} [
\chi ]$ be a corresponding element,
and consider the element $\phi_{\widehat{M}} ( \overline{\eta} ) \otimes 1$ in
$\Ext_{\widehat{A}}^* ( \widehat{M}, \widehat{M} )
\otimes_{\widehat{A}} k$. Since
$\Ext_{\widehat{A}}^* ( \widehat{M}, \widehat{M} )$ is isomorphic to
$\Ext_A^*(M,M) \otimes_A \widehat{A}$, there is an isomorphism
$$\Ext_A^*(M,M) \otimes_A k \xrightarrow{\sim} \Ext_{\widehat{A}}^* (
\widehat{M}, \widehat{M} ) \otimes_{\widehat{A}} k$$
under which the image of an element $\theta \otimes 1 \in
\Ext_A^*(M,M) \otimes_A k$ is $\widehat{\theta} \otimes 1$. Hence
there exists a homogeneous element $\theta^M \in \Ext_A^{|\eta|}(M,M)$
such that $\widehat{\theta^M} \otimes 1$ equals the element
$\phi_{\widehat{M}}( \overline{\eta} ) \otimes 1$ in
$\Ext_{\widehat{A}}^* ( \widehat{M},
\widehat{M} ) \otimes_{\widehat{A}} k$. Now if $\theta^M$ is
represented by the exact sequence
$$0 \to M \to K_{\theta^M} \to \Omega_A^{|\eta|-1}(M) \to 0,$$
then its completion $\widehat{\theta^M}$ is represented by the sequence
$$0 \to \widehat{M} \to \widehat{K_{\theta^M}} \to
\Omega_{\widehat{A}}^{|\eta|-1}(\widehat{M}) \to 0,$$
whose middle term \emph{is the completion of an $A$-module}. By
Theorem \ref{cuttingvariety} the inclusion
$$\V(K_{\theta^M}) \subseteq \V(M) \cap \V_H (\eta)$$
holds, with equality holding whenever $M$ is MCM, and consequently we
obtain the following corollary, showing that every homogeneous
algebraic set in $\tilde{k}^c$ is the variety of an MCM $A$-module.

\begin{corollary}\label{realizing}
Every closed homogeneous variety in $\tilde{k}^c$ is the variety
of some MCM $A$-module.
\end{corollary}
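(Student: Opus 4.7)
The plan is to realize $V$ as the variety of an $A$-module obtained by iteratively applying the construction from the paragraph preceding the corollary, starting from a module whose variety is all of $\tilde{k}^c$. Let $V \subseteq \tilde{k}^c$ be a closed homogeneous variety defined by a homogeneous ideal $\az \subseteq H$. Because $V$ is homogeneous we have $0 \in V$, so $\az \subseteq H^+$; I would pick homogeneous generators $f_1, \dots, f_n \in H^+$ of $\az$ (if $\az = 0$, no cutting is needed). The goal is to produce MCM $A$-modules $M_0, M_1, \dots, M_n$ satisfying $\V(M_i) = \V_H(f_1, \dots, f_i)$, so that $\V(M_n) = V$.

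For the base case I would take $M_0 = \Omega_A^d(k)$, where $d = \dim A$. Since $A$ is Cohen--Macaulay of dimension $d$, a standard depth computation on the minimal free resolution of $k$ gives $\depth M_0 \geq d$, so $M_0$ is MCM; varieties are invariant under syzygies, hence $\V(M_0) = \V(k) = \tilde{k}^c$, where the last equality is the classical fact that $H$ acts on $\Ext_{\widehat{A}}^*(k,k)$ with zero annihilator (equivalently, the residue field of a complete intersection of codimension $c$ has complexity $c$). For the inductive step, given $M_{i-1}$ MCM with $\V(M_{i-1}) = \V_H(f_1, \dots, f_{i-1})$, I would apply the procedure described between Theorem \ref{cuttingvariety} and the corollary to $M_{i-1}$ with $\eta = f_i$: this yields $\theta_i \in \Ext_A^{|f_i|}(M_{i-1}, M_{i-1})$ whose completion corresponds to $f_i$, together with an $A$-module $M_i = K_{\theta_i}$ fitting into the exact sequence
$$0 \to M_{i-1} \to M_i \to \Omega_A^{|f_i|-1}(M_{i-1}) \to 0.$$
Since $\widehat{M_{i-1}}$ is MCM, Theorem \ref{cuttingvariety} then yields $\V(M_i) = \V(M_{i-1}) \cap \V_H(f_i) = \V_H(f_1, \dots, f_i)$; and because $A$ is Gorenstein, both outer terms of the displayed sequence are MCM and extensions of MCM modules are MCM, so $M_i$ is MCM. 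After $n$ iterations, $M_n$ is the module sought.

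The only non-mechanical ingredient is the base case, i.e.\ producing an MCM $A$-module of full variety $\tilde{k}^c$. Once $\V(k) = \tilde{k}^c$ is granted, the argument is a straightforward induction on the number of generators of the defining ideal of $V$, relying on Theorem \ref{cuttingvariety}, the invariance of varieties under syzygies, the closure of MCM modules under syzygies and extensions over the Gorenstein ring $A$, and the observation (made in the paragraph between the theorem and the corollary) that the cutting construction stays within the category of $A$-modules rather than only $\widehat{A}$-modules.
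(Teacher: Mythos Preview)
Your proposal is correct and follows essentially the same route as the paper: start from $\Omega_A^{\dim A}(k)$, whose variety is $\tilde{k}^c$, and successively cut by the homogeneous generators of the defining ideal using Theorem~\ref{cuttingvariety} together with the preceding discussion. You supply a bit more detail than the paper does---in particular the verification that each $K_{\theta_i}$ is again MCM (via the depth lemma applied to the defining extension), which the paper leaves implicit but which is needed to invoke the equality case of Theorem~\ref{cuttingvariety} at the next step.
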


\begin{proof}
Let $\eta_1, \dots, \eta_t$ be homogeneous elements in $H^+$, and
denote by $M$ the MCM module $\Omega_A^{\dim A}(k)$. Then $\V(M) =
\V(k) = \tilde{k}^c$, and from the theorem and the above discussion we
see that there exists a homogeneous element $\theta_1 \in
\Ext_A^{|\eta_1|}(M,M)$ with the property that $\V(K_{\theta_1}) =
\V(M) \cap \V_H(\eta_1) = \V_H(\eta_1)$. Repeating the process
with $\eta_2, \dots, \eta_t$ we end up with an MCM $A$-module $K$ such
that
$$\V(K) = \V_H(\eta_1) \cap \cdots \cap \V_H(\eta_t) =
\V_H(\eta_1, \dots, \eta_t).$$
\end{proof}

\begin{remarks}
(i) In an unpublished preprint (as of December 2006), L.\ Avramov and
D.\ Jorgensen obtain a different proof of Corollary \ref{realizing},
based on a result concerning the realization of certain graded modules
as cohomology modules. L.\ Avramov reported it at a meeting at MSRI,
Berkeley, in December 2002.

(ii) In \cite{Erdmann} a realization theorem is proved for finite
dimensional algebras, and this result applies to complete
intersections containing a field (see also \cite[Section 7]{Snashall}).
\end{remarks}

\section{Decomposition}

Before proving the next result, recall that an
\emph{MCM-approximation} of an $A$-module $X$ is an exact sequence
$$0 \to Y_X \to C_X \xrightarrow{f} X \to 0$$
where $C_X$ is MCM and $Y_X$ has finite injective dimension. The
approximation is \emph{minimal} if the map $f$ is right minimal,
that is, if every map $C_X \xrightarrow{g} C_X$ satisfying $f=fg$ is
an isomorphism. This notion was introduced in \cite{Auslander2},
where it was shown that every finitely generated module over a
commutative Noetherian ring admitting a dualizing module has an
MCM-approximation. Moreover, it follows from the remark following
\cite[Theorem 18]{Martsinkovsky} that every finitely generated
module over a commutative local Gorenstein ring has a minimal
MCM-approximation, which is unique up to isomorphism. In particular
this applies to our setting, where $A$ is a local complete
intersection. Furthermore, since $A \to \widehat{A}$ is a faithfully
flat local homomorphism, an $A$-module $Z$ has finite projective
dimension if and only if the $\widehat{A}$-module $\widehat{Z}$ has
finite projective dimension, and it follows from \cite[Theorem
23.3]{Matsumura} that $Z$ is MCM if and only if $\widehat{Z}$ is
MCM. Therefore, by \cite[Proposition 19]{Martsinkovsky} and the fact
that over a Gorenstein ring the modules having finite injective
dimension are precisely those having finite projective dimension, we
see that
$$0 \to Y_X \to C_X \xrightarrow{f} X \to 0$$
is a minimal MCM-approximation if and only if
$$0 \to \widehat{Y}_X \to \widehat{C}_X \xrightarrow{\widehat{f}}
\widehat{X} \to 0$$ is a minimal MCM-approximation.

We are now ready to prove the second main result. It is the
commutative complete intersection version of J.\ Carlson's famous
theorem (see \cite{Carlson}) from modular representation theory;
if the variety $V$ of a $kG$-module $L$ (where $k$ is an
algebraically closed field and $G$ is a finite group) decomposes
as $V = V_1 \cup V_2$, where $V_1$ and $V_2$ are closed varieties
having trivial intersection, then $L$ decomposes as $L = L_1
\oplus L_2$ where the variety of $L_i$ is $V_i$. Our proof follows
closely that of J.\ Carlson, but with some adjustments.

\begin{theorem}\label{thm3.1}
If for an $A$-module $M$ we have $\V (M) = V_1 \cup V_2$ where $V_1$
and $V_2$ are closed homogeneous varieties having trivial
intersection, then the completion $\widehat{C}_M$ of the minimal
MCM-approximation of $M$ decomposes as $\widehat{C}_M = C_1 \oplus
C_2$ with $\V (C_i) = V_i$.
\end{theorem}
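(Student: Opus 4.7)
The plan is to adapt Carlson's decomposition argument to the complete-intersection setting, using Theorem \ref{cuttingvariety} and Lemma \ref{link} in place of the tensor-product constructions available over group algebras. Note that $\widehat{C}_M$ is MCM over $\widehat{A}$ with $\V(\widehat{C}_M) = \V(M) = V_1 \cup V_2$, so the problem reduces to decomposing this single $\widehat{A}$-module as $\widehat{C}_M = C_1 \oplus C_2$ with $\V(C_i) = V_i$.

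The first concrete step is to convert the hypothesis $V_1 \cap V_2 = \{0\}$ into cohomological data. Since both $V_i$ are closed homogeneous subsets of $\tilde{k}^c$, the homogeneous Nullstellensatz gives $\sqrt{I(V_1) + I(V_2)} = H^+$, and graded prime avoidance against the non-maximal minimal primes of $I(V_{3-i})$ provides finitely many homogeneous elements of $I(V_i)$ whose common vanishing locus meets $V_1 \cup V_2$ in exactly $V_i$. Iterating Theorem \ref{cuttingvariety} along these elements, as in the proof of Corollary \ref{realizing}, yields homogeneous Yoneda classes $\theta_1, \theta_2 \in \Ext_{\widehat{A}}^*(\widehat{C}_M, \widehat{C}_M)$ whose associated pushout modules $K_{\theta_i}$ are MCM with $\V(K_{\theta_i}) = V_i$; the $H$-image $\eta_i$ of each $\theta_i$ then satisfies $\V_H(\eta_i) \cap \V(\widehat{C}_M) = V_i$.

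Because $\eta_1 \eta_2$ vanishes on $V_1 \cup V_2 = \V(\widehat{C}_M)$, it lies in $\sqrt{\Ann_H E(\widehat{C}_M, \widehat{C}_M)}$, so after replacing $\theta_1, \theta_2$ by sufficiently large Yoneda powers (which does not change the varieties of the $K_{\theta_i}$) I may assume $\theta_1 \theta_2 = \theta_2 \theta_1 = 0$. Applying Lemma \ref{link} with $\theta = \theta_i$ and $\eta = \theta_{3-i}$ collapses $K_{\theta_i \theta_{3-i}}$ into a trivial extension, and, after absorbing free summands, delivers an exact sequence
\[
0 \to \Omega_{\widehat{A}}^{|\theta_i|}(K_{\theta_{3-i}}) \to \widehat{C}_M \oplus \Omega_{\widehat{A}}^{|\theta_1|+|\theta_2|-1}(\widehat{C}_M) \to K_{\theta_i} \to 0,
\]
whose left and right terms are MCM modules of varieties $V_{3-i}$ and $V_i$ respectively. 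Provided this sequence splits, the middle object becomes a direct sum of an MCM module with variety $V_{3-i}$ and one with variety $V_i$; Krull--Schmidt for $\widehat{A}$-modules, available because $\widehat{A}$ is complete local, then distributes every indecomposable summand of $\widehat{C}_M$ between the two pieces according to whether its variety sits in $V_1$ or in $V_2$. Regrouping these summands produces a decomposition $\widehat{C}_M = C_1 \oplus C_2$ with $\V(C_i) \subseteq V_i$, and the equality $\V(C_i) = V_i$ follows from $\V(\widehat{C}_M) = V_1 \cup V_2$ together with $V_1 \cap V_2 = \{0\}$.

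The main obstacle will be securing the splitting of the exact sequence above. Its connecting class lies in an $\Ext$-group between two MCM modules whose varieties meet only at the origin, so it is $H^+$-torsion; the delicate point is to arrange, by a further adjustment of the Yoneda powers of $\theta_1$ and $\theta_2$ (or by an auxiliary pushout construction), that this class actually vanishes. This is the step at which Carlson's tensor-product argument over $kG$ has to be replaced by a Yoneda-level manipulation compatible with the pushout framework of Section 2.
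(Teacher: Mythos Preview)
Your reduction to an MCM module over $\widehat{A}$ and the use of Lemma~\ref{link} are correct, and in fact the splitting you worry about at the end would follow immediately: once the two ends of the short exact sequence are MCM with disjoint varieties, \cite[Theorem~5.6]{Avramov2} and \cite[Theorem~4.2]{Araya} give $\Ext^1(K_{\theta_i},\Omega^{|\theta_i|}(K_{\theta_{3-i}}))=0$, so no ``further adjustment'' is needed there.

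The real gap is earlier, in your construction of $\theta_1,\theta_2$. You claim that iterating Theorem~\ref{cuttingvariety} along several homogeneous generators produces a \emph{single} class $\theta_i\in\Ext_{\widehat{A}}^*(\widehat{C}_M,\widehat{C}_M)$ with $\V(K_{\theta_i})=V_i$, equivalently a single homogeneous $\eta_i\in H$ with $\V_H(\eta_i)\cap\V(\widehat{C}_M)=V_i$. This is impossible as soon as $\dim V_{3-i}\ge 2$: a single hypersurface $\V_H(\eta_i)$ can drop the dimension of $V_{3-i}$ by at most one, so $\V_H(\eta_i)\cap V_{3-i}$ is never reduced to the origin. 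What the iteration in Corollary~\ref{realizing} actually yields is a module built by successive $K$-constructions, each over the \emph{previous} $K$-module, not a $K_\theta$ with $\theta$ living over $\widehat{C}_M$; Lemma~\ref{link} therefore does not apply to it in the way you need.

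The paper avoids this by not trying to reach $V_i$ in one step. It argues by induction on $\dim V_1+\dim V_2$: one chooses a single $\theta_1\in I(V_1)$ with $\dim\bigl(V_2\cap\V_H(\theta_1)\bigr)<\dim V_2$ (and symmetrically $\theta_2$), so that $\V(K_{\theta_i^C})=V_i\cup\bigl(V_{3-i}\cap\V_H(\theta_i)\bigr)$ is again a disjoint union but of strictly smaller total dimension. The induction hypothesis then decomposes each $K_{\theta_i^C}$, and the same $\Ext$-vanishing argument splits the Lemma~\ref{link} sequence into two pieces, from which Krull--Schmidt extracts the decomposition of $\widehat{C}_M$. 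Introducing this induction is the missing idea in your outline.
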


\begin{proof}
Let
$$0 \to Y \to C \to M \to 0$$
be the minimal MCM-approximation of $M$. Since $Y$ has finite
injective dimension (or equivalently, finite projective dimension),
it follows from \cite[Theorem 5.6]{Avramov2} that $\V (Y)$ is
trivial and that we therefore have $\V (M) = \V (C)$. Moreover, by
definition the equality $\V (X) = \V ( \widehat{X} )$ holds for
every $A$-module $X$, and therefore we may suppose that $A$ is
complete.

We argue by induction on the integer $\dim V_1 + \dim V_2$. If one
of $V_1$ and $V_2$, say $V_2$, is zero dimensional, then $V_2$ is
trivial, and the decomposition $C = C' \oplus P$, with $P$ being
the maximal projective summand of $C$, satisfies the conclusion of
the theorem. Suppose therefore that $\dim V_i$ is nonzero for $i
=1,2$.

Let $\az_1$ and $\az_2$ be homogeneous ideals of $H = k [ \chi ]$
defining the varieties $V_1$ and $V_2$, i.e.\
$V_i$ is the algebraic set $\V_H ( \az_i )$ in $\tilde{k}^c$
defined by $\az_i$ for $i=1,2$. We then have equalities
$$\{ 0 \} = V_1 \cap V_2 = \V_H ( \az_1 ) \cap \V_H ( \az_2 ) =
\V_H ( \az_1 + \az_2 ),$$ and so it follows from Hilbert's
Nullstellensatz that for each $1 \leq i \leq c$ we have $\chi_i
\in \sqrt{ \az_1 + \az_2}$. Therefore $\sqrt{ \az_1 + \az_2}$ is
the graded maximal ideal $H^+$ of $H$, i.e.\ $\sqrt{ \az_1 +
\az_2} = ( \chi )$.

Pick a homogeneous element $\theta \in H^+$ with the property that
$\dim H/(\az_2, \theta) < \dim H / \az_2$ (this is possible since
$\dim H/\az_2 = \dim V_2 >0$). By the above there is an integer $n
\geq 1$ such that $\theta^n$ belongs to $\az_1 + \az_2$, i.e.\
$\theta^n = \theta_1 + \eta$ where $\theta_1 \in \az_1$ and $\eta
\in \az_2$. Then $\dim H/( \az_2,\theta_1 ) < \dim H/\az_2$, which
translates to the language of varieties as $\dim \left ( \V_H (
\az_2 ) \cap \V_H ( \theta_1 ) \right ) = \dim \V_H ( \az_2 +
(\theta_1) ) < \dim \V_H ( \az_2 )$. Similarly we can find an
element $\theta_2 \in \az_2$ having the property that it ``cuts
down" the variety defined by $\az_1$. Hence the two homogeneous
elements $\theta_1$ and $\theta_2$ satisfy
\begin{eqnarray*}
\theta_1 \in \az_1, \hspace{.3cm} \dim \left ( V_2 \cap \V_H
(\theta_1)
\right ) < \dim V_2, \\
\theta_2 \in \az_2, \hspace{.3cm} \dim \left ( V_1 \cap \V_H
(\theta_2) \right ) < \dim V_1.
\end{eqnarray*}

\sloppy Now since $\V_H (\theta_1 \theta_2)= \V_H (\theta_1) \cup
\V_H (\theta_2) \supseteq V_1 \cup V_2 = \V (C)$, it follows once
more from Hilbert's Nullstellensatz that $\theta_1 \theta_2 \in
\sqrt{ \Ann_H E(C,C)}$, where $E(C,C) = \Ext_A^* (C,C) \otimes_A
k$. Replacing $\theta_1$ and $\theta_2$ by suitable powers, we may
assume that $\theta_1 \theta_2 \in \Ann_H E(C,C)$. Viewed as
elements in $A [ \chi ] \otimes_A k$ we have
$\theta_i = \overline{\theta}_i \otimes 1$, where
$\overline{\theta}_1$ and $\overline{\theta}_2$ are homogeneous
elements of positive degrees in $A [ \chi ]$ with
the property that $\overline{\theta}_1 \overline{\theta}_2 \in
\Ann_{A [ \chi ]} \Ext_A^* (C,C)$. To see the
latter, note that $0= \theta_1 \theta_2 \left ( \Ext_A^i (C,C)
\otimes_A k \right ) = \overline{\theta}_1 \overline{\theta}_2
\Ext_A^i (C,C) \otimes_A k$ for every $i \geq 0$, and since
$\overline{\theta}_1 \overline{\theta}_2 \Ext_A^i (C,C)$ is a
finitely generated $A$-module ($\overline{\theta}_1
\overline{\theta}_2$ commutes with elements in $A$), the claim
follows.

Now consider the images $\theta_1^C$ and  $\theta_2^C$ of
$\overline{\theta}_1$ and $\overline{\theta}_2$ in $\Ext_A^*
(C,C)$. Since $\theta_1^C \theta_2^C =0$, the bottom exact
sequence in the exact commutative diagram
$$\xymatrix{
0 \ar[r] & \Omega_A^{|\theta_1^C|+|\theta_2^C|}(C)
\ar[d]^{f_{\theta_1^C \theta_2^C}} \ar[r] &
Q_{|\theta_1^C|+|\theta_2^C|-1} \ar[d] \ar[r] &
\Omega_A^{|\theta_1^C|+|\theta_2^C|-1}(C) \ar@{=}[d] \ar[r] & 0 \\
0 \ar[r] & C \ar[r] & K_{\theta_1^C \theta_2^C} \ar[r] &
\Omega_A^{|\theta_1^C|+|\theta_2^C|-1}(C) \ar[r] & 0 }$$ splits,
where $Q_n$ denotes the $n$'th module in the minimal free resolution
of $C$. Therefore $K_{\theta_1^C \theta_2^C}$ is isomorphic to $C
\oplus \Omega_A^{|\theta_1^C|+|\theta_2^C|-1}(C)$, and from Lemma
\ref{link} we see that there exists an exact sequence
\begin{equation*}\label{ES3}
0 \to \Omega_A^{|\theta_1^C|}(K_{\theta_2^C}) \to C \oplus
\Omega_A^{|\theta_1^C|+|\theta_2^C|-1}(C) \oplus F \to
K_{\theta_1^C} \to 0 \tag{$\dagger$}
\end{equation*}
for some free module $F$. From Theorem \ref{cuttingvariety} we have
$\V (K_{\theta_i^C}) = \V (C) \cap \V_H ( \theta_i )$, hence the
equality $\V (C) = V_1 \cup V_2$ and the inclusion $V_i \subseteq
\V_H (\theta_i)$ give the equalities
\begin{eqnarray*}
\V (K_{\theta_1^C}) &=&  V_1 \cup \left ( V_2 \cap \V_H (
\theta_1) \right ),
\\
\V (K_{\theta_2^C}) &=& V_2 \cup \left ( V_1 \cap \V_H (\theta_2)
\right ).
\end{eqnarray*}
By induction there exist $A$-modules $X_1, X_2, Y_1$ and $Y_2$
such that $K_{\theta_1^C} = X_1 \oplus X_2$ and
$\Omega_A^{|\theta_1^C|}(K_{\theta_2^C}) = Y_1 \oplus Y_2$, and
such that
\begin{eqnarray*}
\V (X_1) &=& V_1, \\
\V (X_2) &=& V_2 \cap \V_H (\theta_1), \\
\V (Y_1) &=& V_1 \cap \V_H (\theta_2), \\
\V (Y_2) &=& V_2.
\end{eqnarray*}
Now since $\V (X_1) \cap \V (Y_2)$ and $\V (X_2) \cap \V (Y_1)$
are contained in $V_1 \cap V_2$, which is trivial, we see from
\cite[Theorem 5.6]{Avramov2} that $\Ext_A^i (X_1,Y_2)$ and
$\Ext_A^i (X_2,Y_1)$ vanish for $i \gg 0$. But $K_{\theta_1^C}$ is
MCM, implying $X_1$ and $X_2$ are both MCM, and so it follows from
\cite[Theorem 4.2]{Araya} that $\Ext_A^i (X_1,Y_2)$ and $\Ext_A^i
(X_2,Y_1)$ vanish for $i \geq 1$. Therefore
$$\Ext_A^1(K_{\theta_1^C},\Omega_A^{|\theta_1^C|}(K_{\theta_2^C}))
=\Ext_A^1(X_1,Y_1) \oplus \Ext_A^1(X_2,Y_2),$$ and this implies
that the exact sequence (\ref{ES3}) is equivalent to the direct
sum of two sequences of the form
$$0 \to Y_i \to Z_i \to X_i \to 0$$
for $i=1,2$, where $Z_i$ is an $A$-module. Then $C \oplus
\Omega_A^{|\theta_1^C|+|\theta_2^C|-1}(C) \oplus F$ must be
isomorphic to $Z_1 \oplus Z_2$, and since $\V (Z_i) \subseteq \V
(X_i) \cup \V (Y_i) \subseteq V_i$ and the Krull-Schmidt property
holds for the category of (finitely generated) modules over a
complete local ring, there must exist $A$-modules $C_1$ and $C_2$
such that $C = C_1 \oplus C_2$ and $\V (C_i) = \V (Z_i)$. Since
$$V = \V (C_1) \cup \V (C_2) \subseteq V_1 \cup V_2 = V$$
we must have $\V (C_i) = V_i$, and the proof is complete.
\end{proof}

\begin{corollary}\label{cor3.2}
The projective variety of a complete indecomposable MCM $A$-module
is connected.
\end{corollary}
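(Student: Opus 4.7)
The plan is to deduce the corollary directly from Theorem \ref{thm3.1} by contrapositive. Suppose $M$ is a complete indecomposable MCM $A$-module whose projective variety is disconnected. By definition this means that the affine cone $\V(M)$ can be written as a union $\V(M) = V_1 \cup V_2$ of two closed homogeneous subvarieties, each of positive (Krull) dimension, whose intersection $V_1 \cap V_2$ is the trivial variety $\{0\}$. The goal is to derive that $M$ is decomposable, yielding a contradiction.

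First I would reduce the MCM-approximation $0 \to Y \to C \to M \to 0$ to something trivial: since $M$ is itself MCM, the sequence $0 \to 0 \to M \xrightarrow{\Id} M \to 0$ is an MCM-approximation, and by uniqueness of the minimal MCM-approximation (recalled in the paragraph preceding Theorem \ref{thm3.1}) we have $C_M = M$. Because $M$ is assumed complete, the completion $\widehat{C}_M$ equals $M$ itself.

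Next I would apply Theorem \ref{thm3.1} to the decomposition $\V(M) = V_1 \cup V_2$. This yields a direct sum decomposition
\[
M = \widehat{C}_M = C_1 \oplus C_2
\]
with $\V(C_i) = V_i$ for $i=1,2$. Since $\dim V_i > 0$, neither $V_i$ is the trivial variety $\{0\}$, and in particular neither $C_i$ can be the zero module (nor can either $C_i$ have trivial support variety, which would force $C_i$ to have finite projective dimension by \cite[Theorem 5.6]{Avramov2}, contradicting that it is a nonzero summand of the MCM module $M$ with nontrivial variety). Thus both $C_1$ and $C_2$ are nonzero, contradicting the indecomposability of $M$.

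The argument is essentially a one-line consequence of the theorem, so I do not anticipate a hard step; the only subtlety is being careful about what ``disconnected projective variety'' means cohomologically, namely ruling out the degenerate case where one of $V_1, V_2$ is $\{0\}$ (which would correspond to an empty projective component and not to genuine disconnectedness). Once that is handled, the rest is bookkeeping.
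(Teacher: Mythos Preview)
Your argument is correct and is exactly the intended deduction: the paper states the corollary without proof because it is immediate from Theorem~\ref{thm3.1}, and you have spelled out precisely that implication, including the identification $\widehat{C}_M = M$ for a complete MCM module and the observation that both summands $C_i$ must be nonzero since each $V_i$ has positive dimension.
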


\section*{Acknowledgements}

I would like to express my gratitude to Lucho Avramov for numerous
comments and improvements. Also, I would like to thank Dave Jorgensen
and my supervisor {\O}yvind
Solberg for valuable suggestions and comments on this paper.


\begin{thebibliography}{EHSST}
\bibitem[AuB]{Auslander2}M.\ Auslander, R.-O.\ Buchweitz,
\emph{The homological theory of maximal Cohen-Macaulay
approximations}, M\'{e}m.\ Soc.\ Math.\ France 38 (1989), 5-37.
\bibitem[Avr]{Avramov1}L.\ Avramov, \emph{Modules of finite
virtual projective dimension}, Invent.\ Math.\ 96 (1989), 71-101.
\bibitem[AvB]{Avramov2}L.\ Avramov, R.-O.\ Buchweitz,
\emph{Support varieties and cohomology over complete
intersection}, Invent.\ Math.\ 142 (2000), 285-318.
\bibitem[AGP]{Avramov3}L.\ Avramov, V.\ Gasharov, I.\ Peeva,
\emph{Complete intersection dimension}, Publ.\ Math.\ I.H.E.S.\ 86
(1997), 67-114.
\bibitem[ArY]{Araya}T.\ Araya, Y.\ Yoshino, \emph{Remarks on a
depth formula, a grade inequality and a conjecture of Auslander},
Comm.\ Algebra 26 (1998), 3793-3806.
\bibitem[Ber]{Bergh}P.A.\ Bergh, \emph{Modules with reducible
complexity}, J.\ Algebra 310 (2007), 132-147.
\bibitem[Car]{Carlson}J.\ Carlson, \emph{The variety of an
indecomposable module is connected}, Invent.\ Math.\ 77 (1984),
291-299.
\bibitem[EHSST]{Erdmann}K.\ Erdmann, M.\ Holloway, N.\ Snashall,
\O.\ Solberg, R.\ Taillefer, \emph{Support varieties for
selfinjective algebras}, $K$-theory 33 (2004), 67-87.
\bibitem[Mar]{Martsinkovsky}A.\ Martsinkovsky,
\emph{Cohen-Macaulay modules and approximations}, in \emph{Trends
in Mathematics: Infinite Length Modules}, H.\ Krause and C.\
Ringel (edts), Birk\"{a}user Verlag (2000), 167-192.
\bibitem[Mat]{Matsumura}H.\ Matsumura, \emph{Commutative ring
theory}, Cambridge University Press, 2000.
\bibitem[SnS]{Snashall}N.\ Snashall, \O.\ Solberg, \emph{Support
varieties and Hochschild cohomology rings}, Proc.\ London Math.\
Soc.\ 88 (2004), 705-732.
\end{thebibliography}
\end{document}